\numberwithin{equation}{section}
\newtheorem{theorem}{Theorem}[section]
\newtheorem{lemma}[theorem]{Lemma}
\newtheorem{proposition}[theorem]{Proposition}
\theoremstyle{definition}
\newtheorem{definition}[theorem]{Definition}
\newtheorem{remark}[theorem]{Remark}
\newtheorem{example}[theorem]{Example}
\begin{document}


\newcommand{\m}[1]{\marginpar{\addtolength{\baselineskip}{-3pt}{\footnotesize
\it #1}}}
\newcommand{\A}{\mathcal{A}}
\newcommand{\K}{\mathcal{K}}
\newcommand{\knd}{\mathcal{K}^{[d]}_n}
\newcommand{\F}{\mathcal{F}}
\newcommand{\N}{\mathbb{N}}
\newcommand{\pr}{\mathbb{P}}
\newcommand{\Z}{\mathbb{Z}}
\newcommand{\R}{\mathbb{R}}
\newcommand{\I}{\mathit{I}}
\newcommand{\G}{\mathcal{G}}
\newcommand{\D}{\mathcal{D}}
\newcommand{\x}{\mathbf{x}}
\newcommand{\lcm}{\operatorname{lcm}}
\newcommand{\ndp}{N_{d,p}}
\newcommand{\tor}{\operatorname{Tor}}
\newcommand{\reg}{\operatorname{reg}}
\newcommand{\mf}{\mathfrak{m}}

\def\bb{{{\rm \bf b}}}
\def\cc{{{\rm \bf c}}}


\title{Combinatorics of Cremona monomial maps}

\author{Aron Simis}
\address{
Departamento de Matem\'atica\\
Universidade Federal
de Pernambuco\\
50740-540 Recife\\ Pe\\ Brazil
}\email{aron@dmat.ufpe.br}
\thanks{The first author was partially supported by a grant
of CNPq. He warmly thanks CINVESTAV for support during a visit.
The second author was partially supported by CONACyT
grant 49251-F and SNI}
\author{Rafael H. Villarreal}
\address{
Departamento de
Matem\'aticas\\
Centro de Investigaci\'on y de Estudios
Avanzados del
IPN\\
Apartado Postal
14--740 \\
07000 Mexico City, D.F.
}
\email{vila@math.cinvestav.mx}

\keywords{} \subjclass[2000]{14E05, 14E07, 15A51, 15A36}

\begin{abstract}
We study Cremona monomial maps using linear algebra, lattice theory 
and linear optimization methods. Among  the results is
a simple integer matrix theoretic proof  that the inverse  of a
Cremona monomial map 
is also defined by monomials
of fixed degree, and moreover, the set of monomials
defining the inverse can be obtained explicitly in terms of the
initial data. We present another method to compute the inverse of a
Cremona monomial map based on integer programming
techniques and the notion of a Hilbert basis. 
A neat consequence is drawn for the plane Cremona monomial
group, in particular the known result saying that a  plane Cremona
monomial map 
and its inverse have the same degree. 
\end{abstract}

\maketitle

\section{Introduction}

The expression ``birational combinatorics'' has been introduced in
\cite{birational-linear} 
to mean the combinatorial theory of
rational maps $\pr^{n-1}\dasharrow \pr^{m-1}$ defined by
monomials, along with natural integer arithmetic criteria for such
maps to be birational onto their 
image varieties. As claimed there, both the theory and the criteria
were intended to be 
a simple transcription of the initial geometric data.
Yet another goal is to write characteristic-free results. Thus, here
too one works over an arbitrary field 
in order that the theory be essentially independent of the nature of
the field of coefficients, 
specially when dealing with square-free monomials.

In this paper, we stick to the case where $m=n$ and deal with Cremona
maps. 
An important step  has been silently taken for granted in the
background 
of \cite[Section 5.1.2]{birational-linear}, namely, that the inverse
of a Cremona monomial map 
is also defined by monomials. To be fair this result can be obtained
via the method of \cite[Section 3]{birational-linear} 
together with the criterion of \cite{bir2003}; however
the latter gives no hint on how to derive explicit data from the
given ones. The main results of this paper provide methods
for the computation of the inverse of a Cremona monomial map and the
related invariants.

Here we add a few steps to the theory, by setting up a direct way to
convert geometric results into 
numeric or combinatorial data regardless of the ground field nature. 
The conversion allows for an incursion into some of the details of
the theory of plane Cremona maps 
defined by monomials. In particular, it is shown that the group of
such maps under composition 
is completely understood without recurring to the known results about
general plane Cremona maps. 
Thus, one shows that this group is generated by two basic monomial
quadratic maps, up to reordering of variables in the source 
and the target.
The result is not a trivial consequence of Noether's theorem since
the latter requires 
composing with projective transformations, which is out of the
picture here. Moreover, the known proofs of 
Noether's theorem  (see, e.g., \cite{alberich}) reduce to various
special situations, passing through the celebrated de 
Jonqui\`eres maps which are rarely monomial.

The well-known result that a plane Cremona map and its inverse have
the same degree is shown here for 
such monomial maps by an easy numerical counting. The argument for 
general plane Cremona maps is not difficult
but requires quite a bit of geometric insight and preparation (see,
e.g., \cite[Proposition 2.1.12]{alberich}). 

Cremona monomial maps have been dealt with in \cite{pan} and in
\cite{Kor}, but the methods 
and some of the goals are different and have not been drawn upon 
here. The group structure of Cremona monomial maps is studied in
\cite{pan} by means of toric algebra with emphasis on the
multiplicative structure, 
while in the present account we take up the so to say additive side
of the problem  
by stressing the underlying integer arithmetic. In \cite{Kor} the
group of plane Cremona monomial maps is 
described by generators and relations and it is 
shown that this group (except that they allow certain coefficients) 
is generated by permutations and two degree $2$ maps: hyperbolism and 
the Steiner involution (see Section~\ref{cremona}).

\section{Integer linear algebra and Cremona inverses}

In this section we use techniques from lattice theory and integer linear
algebra to give an algorithm that computes the ``Cremona inverse
matrix'' (see Definition~\ref{cremona-inverse-def}) of a given  
$d$-stochastic square matrix $A$ with entries in $\mathbb{N}$ 
whose determinant equals $\pm d$. A matrix is called $d$-stochastic if
the sum of the entries of each column is equal to $d$. The algorithm
is included inside the proof of Theorem~\ref{ipn-ufpe}.

Recall that
if $a=(a_1,\ldots,a_n)\in {\mathbb R}^n$, its {\it
support\/} is defined as ${\rm supp}(a)=\{i\, |\, a_i\neq 0\}$.
Note that we can write $a=a^+-a^-$,
where $a^+$ and $a^-$ are two non-negative vectors
with disjoint support. The vectors $a^+$ and $a^-$ are
called the positive and negative part of $a$ respectively.
Following a familiar notation we write $|a|=a_1+\cdots+a_n$. The
$i${\it th} unit vector in $\mathbb{R}^n$ will be denoted by $e_i$. 

We begin with a lemma whose proof uses lattice theory. 

\begin{lemma}\label{starbucks-upstairs}
Let $v_1,\ldots,v_n$ be a set of vectors in
$\mathbb{N}^n$ such that $|v_i|=d\geq 1$ for all $i$ and $\det(A)=\pm
d$, where $A$ is the $n\times n$ matrix with column vectors
$v_1,\ldots,v_n$. Then $A^{-1}(e_i-e_j)\in\mathbb{Z}^n$ for all $i,j$.
\end{lemma}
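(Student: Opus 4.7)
The plan is to reduce the statement to a divisibility assertion via Cramer's rule, and then exploit the hypothesis $|v_i|=d$ in a row-operation argument.

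First I would note that by Cramer's rule, the $k$-th coordinate of $A^{-1}(e_i-e_j)$ equals $\det(A_k)/\det(A) = \pm\det(A_k)/d$, where $A_k$ denotes the matrix obtained from $A$ by replacing its $k$-th column by $e_i-e_j$. Since $A_k$ has integer entries, the conclusion $A^{-1}(e_i-e_j)\in\mathbb{Z}^n$ is equivalent to the integer divisibility $d\mid\det(A_k)$ for every $k$ (and every $i,j$).

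The key step is to use the hypothesis $|v_\ell|=d$ to produce a row of $A_k$ that is divisible by $d$ up to a determinant-preserving row operation. The sum of all rows of $A_k$ has $k$-th entry equal to $\sum_\ell(\delta_{\ell i}-\delta_{\ell j})=0$, while for $s\neq k$ the $s$-th entry is the column sum of $v_s$, which equals $d$. Thus the sum of rows of $A_k$ is $d\cdot(1,\ldots,1,0,1,\ldots,1)$ with the $0$ in position $k$.

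Now I would replace one of the rows of $A_k$ (say row $i$) by the sum of all rows. This is a finite composition of elementary ``add one row to another'' operations, so it preserves the determinant. The new matrix $A_k'$ has row $i$ equal to $d\cdot(1,\ldots,1,0,1,\ldots,1)$ and integer entries elsewhere. Factoring $d$ out of row $i$ yields $\det(A_k)=\det(A_k')=d\cdot\det(B)$ for an integer matrix $B$, which gives the required divisibility $d\mid\det(A_k)$ and hence finishes the proof. There is no serious obstacle here once one spots that the ``column-sums equal $d$'' condition translates, after an elementary row operation on the modified matrix, into a full row of multiples of $d$; the whole argument is a short application of Cramer's rule combined with this observation.
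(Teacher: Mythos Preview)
Your proof is correct and takes a genuinely different route from the paper's. The paper argues structurally: writing $A^{-1}(e_i-e_j)=\sum_k\lambda_k e_k$ with $\lambda_k\in\mathbb{Q}$, it first uses $\mathbf{1}A=d\,\mathbf{1}$ to obtain $\sum_k\lambda_k=0$, rewrites $e_i-e_j=\sum_{k\geq 2}\lambda_k(v_k-v_1)$, and then invokes a cited result asserting that the quotient $\mathbb{Z}^n/\mathbb{Z}\{v_k-v_1\}_{k\geq 2}$ is torsion-free under the given hypotheses; since some integer multiple of $e_i-e_j$ visibly lies in this sublattice, so does $e_i-e_j$ itself, forcing the $\lambda_k$ to be integers. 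Your Cramer-plus-row-sum argument is more elementary and entirely self-contained, and it makes transparent exactly where each hypothesis enters: the column-sum condition $|v_s|=d$ alone already yields $d\mid\det(A_k)$, while $\det(A)=\pm d$ is what converts that divisibility into integrality of the coordinates. The paper's approach, by contrast, ties the lemma into the lattice-theoretic framework developed elsewhere in that line of work, at the cost of depending on an external reference.
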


\begin{proof} Fixing indices $i,j$, there are
$\lambda_1,\ldots,\lambda_n$ in $\mathbb{Q}$ 
such that $A^{-1}(e_i-e_j)=\sum_{k=1}^n\lambda_ke_k$. Notice that
$A^{-1}(e_i)$ is the $i${\it th} column of $A^{-1}$. Set
$\mathbf{1}=(1,\ldots,1)$. Since $\mathbf{1}A=d\mathbf{1}$,
we get $\mathbf{1}/d=\mathbf{1}A^{-1}$. Therefore
$|A^{-1}(e_i)|=|A^{-1}(e_j)|=1/d$ and $\sum_k\lambda_k=0$. Then we can
write
$$
A^{-1}(e_i-e_j)=\sum_{k=2}^n\lambda_k(e_i-e_1)\ \Longrightarrow\
e_i-e_j=\sum_{k=2}^n\lambda_k(v_k-v_1).
$$
Thus there is $0\neq s\in\mathbb{N}$ such that
$s(e_i-e_j)$ belong to $\mathbb{Z}\{v_1-v_k\}_{k=2}^n$, the subgroup
of $\mathbb{Z}^n$ generated by $\{v_1-v_k\}_{k=2}^n$. By
\cite[Lemma 2.2 and Theorem 2.6]{birational-linear}, the quotient group
$\mathbb{Z}^n/\mathbb{Z}\{v_1-v_k\}_{k=2}^n$ is free, in particular
has no non-zero torsion elements. Then we 
can write
$$
e_i-e_j=\eta_2(v_2-v_1)+\cdots+\eta_n(v_n-v_1),
$$
for some $\eta_i$'s in $\mathbb{Z}$. Since
$\mathbb{Z}\{v_1-v_k\}_{k=2}^n$ is also free (of rank $n-1$), 
the vectors $v_2-v_1,\ldots,v_n-v_1$
are linearly independent. Thus
$\lambda_k=\eta_k\in \mathbb{Z}$ for all $k\geq 2$,
hence ultimately $A^{-1}(e_i-e_j)\in\mathbb{Z}^n$.
\end{proof}

Next we state our main result of integer linear algebra nature.
Its geometric translation to Cremona monomial maps and applications
will be given in 
Section~\ref{cremona}. 

\begin{theorem}\label{ipn-ufpe} Let $v_1,\ldots,v_n$ be a set of
vectors in 
$\mathbb{N}^n$ such that $|v_i|=d\geq 1$ for all $i$ and $\det(A)=\pm
d$, where $A$ is the $n\times n$ matrix with column vectors
$v_1,\ldots,v_n$. Then there are unique vectors
$\beta_1,\ldots,\beta_n,\gamma \in \mathbb{N}^n$ such that the
following two conditions hold{\rm :}
\begin{enumerate}
\item[(a)] $A\beta_i=\gamma+e_i$ for all $i$, where $\beta_i,\gamma$
and $e_i$ are regarded as column vectors$\,${\rm ;}
\item[(b)] The matrix $B$ whose columns are
$\beta_1,\ldots,\beta_n$ has at least one zero entry in every row.
\end{enumerate}
Moreover, $\det(B)=\pm (|\gamma|+1)/d=\pm |\beta_i|$ for all $i$.
\end{theorem}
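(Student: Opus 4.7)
The plan is to use Lemma \ref{starbucks-upstairs} to reduce the existence problem to choosing the single vector $\beta_1$, with the normalization condition (b) then pinning that choice down uniquely. Setting $\delta_i := A^{-1}(e_i - e_1)$, the lemma guarantees $\delta_i \in \mathbb{Z}^n$ for every $i$, and any candidate satisfying (a) must obey $\beta_i - \beta_1 = A^{-1}(e_i - e_1) = \delta_i$ together with $\gamma = A\beta_1 - e_1$. Hence the whole tuple $(\beta_1,\ldots,\beta_n,\gamma)$ is determined by $\beta_1$ alone.

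To obtain a solution in $\mathbb{N}^n$ that also satisfies (b), I would set $(\beta_1)_k := -\min_i (\delta_i)_k$ for each row index $k$. Since $\delta_1 = 0$ participates in the minimum, we get $(\beta_1)_k \geq 0$, and the identity $(\beta_i)_k = (\delta_i)_k - \min_j (\delta_j)_k$ shows this quantity is nonnegative and is zero for at least one $i$ in each row. Thus every $\beta_i$ lies in $\mathbb{N}^n$ and the matrix $B$ with columns $\beta_i$ has a zero entry in every row. Uniqueness follows because shifting $\beta_1$ by any nonzero integer vector either drives some entry of $B$ negative or leaves every entry in some row strictly positive, violating (b) or nonnegativity.

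The one genuine verification is that $\gamma \in \mathbb{N}^n$, and this is the main delicate step. Since $A$ and each $\beta_j$ have non-negative entries, $A\beta_j \in \mathbb{N}^n$; so given any coordinate $k$ we can pick an index $j \neq k$ (possible because the case of interest is $n \geq 2$) and read off $\gamma_k = (A\beta_j)_k - (e_j)_k = (A\beta_j)_k \geq 0$. For the numerical assertions, applying $\mathbf{1}^T$ to $A\beta_i = \gamma + e_i$ and using $\mathbf{1}^T A = d\mathbf{1}^T$ yields $d|\beta_i| = |\gamma| + 1$, which in particular shows $|\beta_i|$ is independent of $i$. Since $AB$ has columns $\gamma + e_i$, we obtain $AB = I_n + \gamma \mathbf{1}^T$, a rank-one perturbation of the identity with eigenvalues $1$ (multiplicity $n-1$) and $1 + |\gamma|$. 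Consequently $\det(AB) = 1 + |\gamma|$, and combining with $\det(A) = \pm d$ gives $\det(B) = \pm(|\gamma|+1)/d = \pm|\beta_i|$, as required.
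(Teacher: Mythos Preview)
Your proof is correct and follows essentially the same route as the paper: both use Lemma~\ref{starbucks-upstairs} to parametrize all solutions of (a) by the single vector $\beta_1$ via $\beta_i=\beta_1+A^{-1}(e_i-e_1)$, pick $\beta_1$ by a coordinatewise extremum over these differences, and then read off $\gamma=A\beta_1-e_1$.

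Two small points where your argument is a bit tighter than the paper's. First, by taking $(\beta_1)_k=-\min_i(\delta_i)_k$ with $\delta_1=0$ included in the minimum, you land directly on a $B$ satisfying (b); the paper's choice $m_k=\max_{i\geq 2}\alpha_{ik}^+$ is in fact the same number, so its subsequent ``subtract $\mathbf{1}$ from any all-positive row'' step is never actually triggered, but your formulation makes this transparent. Second, for $\det(\Gamma+I)$ you invoke the rank-one perturbation (matrix determinant lemma) $\det(I+\gamma\mathbf{1}^T)=1+\mathbf{1}^T\gamma$, which is cleaner than the paper's expansion via the auxiliary Lemma~\ref{Gordan}. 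Your remark that $n\geq 2$ is needed for the verification $\gamma\in\mathbb{N}^n$ is accurate and matches the paper's implicit use of an index $i=2$.
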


\begin{proof} First we show the uniqueness. Assume that
$\beta_1',\ldots,\beta_n',\gamma'$ is a set of vectors in
$\mathbb{N}^n$ such that: (a') $A\beta_i'=\gamma'+e_i$ for all $i$,
and (b') The matrix $B'$ whose column vectors are
$\beta_1',\ldots,\beta_n'$ has at least one zero entry in every row.
Let $\Delta=(\Delta_i)$ and $\Delta'=(\Delta_i')$ be non-negative
vectors such that $A^{-1}(\gamma-\gamma')=\Delta'-\Delta$. Then from
(a) and (a') we get
\begin{equation}\label{starbucks}
\beta_i-\beta_i'=A^{-1}(\gamma-\gamma')=\Delta'-\Delta,\ \forall\,
i\ \Longrightarrow\ \beta_{ik}-\beta_{ik}'=\Delta_k'-\Delta_k,\
\forall\, i,k,
\end{equation}
where $\beta_i=(\beta_{i1},\ldots,\beta_{in})$ and
$\beta_i'=(\beta_{i1}',\ldots,\beta_{in}')$. It suffices to show that
$\Delta=\Delta'$. If $\Delta_k'>\Delta_k$ for some $k$,
then, by Eq.~(\ref{starbucks}), we obtain $\beta_{ik}>0$ for
$i=1,\ldots,n$, which contradicts (b). Similarly if
$\Delta_k'<\Delta_k$ for some $k$, 
then, by Eq.~(\ref{starbucks}), we obtain $\beta_{ik}'>0$ for
$i=1,\ldots,n$, which contradicts (b'). Thus $\Delta_k=\Delta_k'$ for
all $k$, i.e., $\Delta=\Delta'$.

Next we prove the existence of $\beta_1,\ldots,\beta_n$ and $\gamma$.
By Lemma~\ref{starbucks-upstairs}, for $i\geq 2$ we can write
$$
0\neq\alpha_i=A^{-1}(e_1-e_i)=\alpha_i^+-\alpha_i^-
$$
where $\alpha_i^+$ and $\alpha_i^-$ are in $\mathbb{N}^n$. Notice
that $\alpha_i^+\neq 0$ and $\alpha_i^-\neq 0$. Indeed
the sum of the entries of $A^{-1}(e_i)$ is equal to $1/d$.
Thus $|\alpha_i|=|\alpha_i^+|-|\alpha_i^-|=0$, and consequently the
positive and 
negative part of $\alpha_i$ are both non-zero for $i\geq 2$. The
vector $\alpha_i^+$ can be written as
$\alpha_i^+=(\alpha_{i1}^+,\ldots,\alpha_{in}^+)$ for $i\geq 2$. For
$1\leq k\leq n$ consider the integers given by
$$
m_k=\max_{2\leq i\leq n}\{\alpha_{ik}^+\}
$$
and set $\beta_1=(m_1,\ldots,m_n)$. Since $\beta_1\geq\alpha_i^+$, for
each $i\geq 2$ there is $\theta_i\in\mathbb{N}^n$ such that
$\beta_1=\theta_i+\alpha_i^+$. Therefore
$$
\alpha_i=A^{-1}(e_1-e_i)=\alpha_i^+-\alpha_i^-=\beta_1-(\theta_i+\alpha_i^-).
$$
We set $\beta_i=\theta_i+\alpha_i^-$ for $i\geq 2$. Since we
have $A\beta_1-e_1=A\beta_i-e_i$ for $i\geq 2$, it follows readily
that $A\beta_1-e_1\geq 0$ (make $i=2$ in the last equality and
compare entries).
Thus,
setting $\gamma:=A\beta_1-e_1$, it follows that $\beta_1,\ldots,\beta_n$
and $\gamma$ satisfy (a). If each row of $B$ has some zero entry the
proof of the existence is complete. If every entry of a row of $B$ is
positive we 
subtract the vector $\mathbf{1}=(1,\ldots,1)$ from that row and change
 $\gamma$ accordingly so that (a) is still satisfied. Applying this
argument repeatedly we get a sequence $\beta_1,\ldots,\beta_n,\gamma$
satisfying (a) and (b).

We now prove the last part of the assertion. Notice that if 
$\beta_{ij}$ denotes the $j$-entry of $\beta_i$, then the equality 
$A\beta_i=\gamma+e_i$ is equivalent to
$\beta_{i1}v_1+\cdots+\beta_{in}v_n=\gamma+e_i$. Thus
$|\beta_i|d=|\gamma|+1$. Note that condition (a) is equivalent to the
equality 
$AB=\Gamma+I$, where $\Gamma$ is the matrix all of whose columns are
equal to $\gamma$. Since $\det(B)=\pm \det (\Gamma+I)/d$
it suffices to show that
$\det(\Gamma+I)=|\gamma|+1$.
The latter is a classical calculation than can be performed in
various ways. One has a more general statement which is given below in
Lemma~\ref{Gordan}. In particular by this lemma, 
if $\Gamma$ has rank at most one and $D$ is the identity matrix, one gets
$\det(\Gamma+I)={\rm trace}(\Gamma)+1$, as required.  This completes
the proof of the theorem.  
\end{proof}

\begin{lemma}\label{Gordan}
Let $\Gamma=(\gamma_{i,j})$ be an $n\times n$  square matrix
over an arbitrary commutative ring, and let
$D={\rm diag}(d_1,\ldots, d_n)$ be a diagonal matrix over the same ring.
Then
\begin{eqnarray*}
\det (\Gamma+D)&=&\det(\Gamma) \\
&+&\sum_id_i \Delta _{[n]\setminus \{i\}}+
\sum_{1\leq i_1<i_2\leq n}d_{i_1}d_{i_2} \Delta_{[n]\setminus\{i_1,i_2\}} \\
&+&\cdots +
\sum_{1\leq i_1<\cdots <i_{n-1}\leq n}d_{i_1}\cdots d_{i_{n-1}}
\Delta_{[n]\setminus\{i_1,\ldots,i_{n-1}\}} \\ 
& +& \det D,
\end{eqnarray*}
where $[n]=\{1,\ldots, n\}$ and $\Delta_{[n]\setminus\{i_1,\ldots,i_{k}\}}$
denotes the principal $(n-k)\times (n-k)$-minor of $\Gamma$ with rows
and columns 
$[n]\setminus \{i_1,\ldots,i_{k}\}$.
\end{lemma}

\begin{proof} It follows from the multi-linearity of the determinant. 
\end{proof}

\begin{definition}\label{cremona-inverse-def} A matrix $A$ that
satisfies the hypotheses of Theorem~\ref{ipn-ufpe} is called a {\it
Cremona matrix} and $B$ is called the {\it Cremona
inverse matrix} of $A$.
\end{definition}

\begin{remark} Bridges and Ryser \cite{ryser} (cf.
\cite[Theorem~4.4]{cornu-book}) considered a particular class of
Cremona matrices. They studied the equation $AB=\Gamma+I$, when $A,B$
are $\{0,1\}$ matrices 
and $\Gamma$ is the matrix with all its entries equal to $1$.
They show that if equality occurs, then each row and column of $A$ has
the same number $r$ of ones, each row and column of $B$ has
the same number $s$ of ones with $rs=n+1$, and $AB=BA$.
\end{remark}

As mentioned before the proof of Theorem~\ref{ipn-ufpe} provides an
algorithm to compute 
the vectors $\beta_1,\ldots,\beta_n$ and $\gamma$ (see
Example~\ref{propedeutico} and the proof of
Proposition~\ref{degree_of_inverse} for specific illustrations of the
algorithm). Other means to
compute these vectors using linear optimization techniques will be discussed in
Section~\ref{computing}.
\begin{example}\label{propedeutico}
Consider the following matrix $A$ and its inverse:
$$
A=\left(\begin{matrix}
d&d-1&0\cr
0&1&d-1\cr
0&0&1
\end{matrix}\right);\ \ \ \ A^{-1}=\frac{1}{d}\left(\begin{matrix}
1&1-d&(d-1)^2\cr
0&d&d(1-d)\cr
0&0&d
\end{matrix}\right).
$$
To compute the $\beta_i$'s and $\gamma$ we follow the proof of
Theorem~\ref{ipn-ufpe}.
Then $\beta_1=(2,d,0)$, $\beta_2=(1,d+1,0)$, $\beta_3=(d,1,1)$,
$\gamma=(d^2+d-1,d,0)$, and 
$$
B=\left(\begin{matrix}
2&1&d\cr
d&d+1&1\cr
0&0&1
\end{matrix}\right).
$$
By subtracting the vector $(1,1,1)$ from rows $1$ and $2$, we get
$$
B'=\left(\begin{matrix}
1&0&d-1\cr
d-1&d&0\cr
0&0&1
\end{matrix}\right).
$$
The column vectors $\beta_1'=(1,d-1,0)$, $\beta_2'=(0,d,0)$,
$\beta_3'=(d-1,0,1)$, $\gamma'=(d^2-d,d-1,0)$ satisfy (a) and (b).
\end{example}

\section{An integer programming method via Hilbert bases}\label{computing}

The proof of Theorem~\ref{ipn-ufpe} provides an algorithm to compute
the inverse of a Cremona matrix. In this section we present another
algorithm based 
on integer programming techniques and the notion of
a Hilbert basis.  From a complexity point of view, the first algorithm
based on simple linear algebra is far faster than the second algorithm
based on integer programming. The reason is the large number of
variables that the second method requires (for small
cases, both algorithms work fine). The second approach is quite
interesting from a theoretical point of view as shown in
\cite{costa-simis}.  

Let $v_1,\ldots,v_n$ be a set of vectors in
$\mathbb{N}^n$ such that $|v_i|=d\geq 1$ for all $i$ and $\det(A)=\pm
d$, where $A$ is the $n\times n$ matrix with column vectors
$v_1,\ldots,v_n$. Then, by Theorem~\ref{ipn-ufpe}, there are unique vectors
$\beta_1,\ldots,\beta_n,\gamma \in \mathbb{N}^n$ such that the
following two conditions hold{\rm :} (a) $A\beta_i=\gamma+e_i$ for
all $i$, and (b) The matrix $B$ whose columns are
$\beta_1,\ldots,\beta_n$ has at least one zero entry in every row.

To compute the sequence $\beta_1,\ldots,\beta_n,\gamma$ using linear
programming we regard the $\beta_i$'s and $\gamma$ as vectors of
indeterminates 
and introduce a new variable $\tau$. Consider the homogeneous
system of linear inequalities

\begin{eqnarray}\label{jan5-09}
A\beta_i&=&\gamma+\tau e_i,\ \ i=1,\ldots,n\\
\beta_i&\geq& 0,\ \ \ i=1,\ldots,n\nonumber\\
\gamma&\geq &0,\ \ \tau\geq 0.\nonumber
\end{eqnarray}
This linear system has $n^2$ equality constraints and $\ell=n^2+n+1$
indeterminates. The set $C$ of solutions form a rational pointed polyhedral
cone. By \cite[Theorem~16.4]{Schr}, there is a unique minimal
integral Hilbert basis 
$$
\mathcal{H}=\{h_1,\ldots,h_r\}
$$
of $C$ such that $\mathbb{Z}^\ell\cap
\mathbb{R}_+\mathcal{H}=\mathbb{N}\mathcal{H}$ and 
$C=\mathbb{R}_+\mathcal{H}$ (minimal relative to
taking subsets), where  $\mathbb{R}_+\mathcal{H}$ denotes the
cone generated by $\mathcal{H}$ consisting of all linear combinations of
$\mathcal{H}$ with non-negative real coefficients and
$\mathbb{N}\mathcal{H}$ denotes the semigroup generated by
$\mathcal{H}$ consisting of all linear combinations of
$\mathcal{H}$ with coefficients in $\mathbb{N}$. The Hilbert basis of
$C$ has the following useful description.

\begin{theorem}{\rm \cite[p. 233]{Schr}}\label{hb-description}
$\mathcal{H}$ is the set of all integral vectors $0\neq h\in C$ such
that $h$ is not the sum of two other non-zero integral vectors in $C$.
\end{theorem}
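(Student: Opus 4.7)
The plan is to let $\mathcal{H}'$ denote the set of all nonzero integral vectors $h \in C$ that cannot be written as $h = h_1 + h_2$ with $h_1, h_2$ two nonzero integer vectors in $C$, and to prove $\mathcal{H} = \mathcal{H}'$ by double inclusion, exploiting that $\mathcal{H}$ is a Hilbert basis, its minimality, and the pointedness of $C$.

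For the inclusion $\mathcal{H}' \subseteq \mathcal{H}$, I would pick $h \in \mathcal{H}'$ and use that $\mathcal{H}$ generates $C \cap \mathbb{Z}^\ell$ as a semigroup to write $h = \sum_i n_i h_i$ with $n_i \in \mathbb{N}$. If $\sum_i n_i \geq 2$, choose any index $i_0$ with $n_{i_0} > 0$ and split $h = h_{i_0} + (h - h_{i_0})$; the remainder lies in $\mathbb{N}\mathcal{H} \subseteq C \cap \mathbb{Z}^\ell$ and is nonzero, contradicting irreducibility of $h$. Hence $h = h_i$ for some $i$, so $h \in \mathcal{H}$.

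For the reverse inclusion $\mathcal{H} \subseteq \mathcal{H}'$, suppose for contradiction that some $h \in \mathcal{H}$ is reducible, say $h = h_1 + h_2$ with $h_1, h_2 \in (C \cap \mathbb{Z}^\ell) \setminus \{0\}$. Expanding each summand over $\mathcal{H}$ yields $h = \sum_j c_j h_j$ with $c_j \in \mathbb{N}$ and $\sum_j c_j \geq 2$, since each of $h_1$ and $h_2$ needs at least one generator. If $c_h = 0$, then $h$ is a nonnegative integer combination of $\mathcal{H} \setminus \{h\}$, and a back-substitution shows that this smaller set still generates $C \cap \mathbb{Z}^\ell$, contradicting minimality. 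If $c_h \geq 1$, comparing with the trivial expression $h = 1 \cdot h$ gives the relation $(c_h - 1) h + \sum_{j \neq h} c_j h_j = 0$ with all coefficients nonnegative.

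The crux, and the one point where the hypothesis that $C$ is pointed really enters, is to conclude from such a nonnegative combination equaling zero that every term must vanish. The argument is iterative: every partial sum lies in $C$, and so does its complementary sum, so each is the negative of the other and therefore belongs to $C \cap (-C) = \{0\}$. This forces $c_h = 1$ and $c_j = 0$ for $j \neq h$, contradicting $\sum_j c_j \geq 2$. I expect this pointedness step to be the only subtle point; everything else is formal manipulation with the defining properties of a minimal Hilbert basis.
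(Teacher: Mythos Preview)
The paper does not give its own proof of this statement: it is quoted verbatim from Schrijver \cite[p.~233]{Schr} and used as a black box, so there is no argument in the paper to compare yours against. What you have written is essentially the standard proof of this characterization of the minimal Hilbert basis of a pointed rational cone, and it is correct.

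One small remark: pointedness is actually used in \emph{both} inclusions, not only in the second one as you suggest. In the step $\mathcal{H}'\subseteq\mathcal{H}$, when you write $h=\sum_i n_i h_i$ with $\sum_i n_i\geq 2$ and split off one $h_{i_0}$, you assert that the remainder $h-h_{i_0}$ is nonzero. This too relies on $C\cap(-C)=\{0\}$: if the remainder vanished then $(n_{i_0}-1)h_{i_0}+\sum_{i\neq i_0}n_i h_i=0$ would force all coefficients to be zero, contradicting $\sum_i n_i\geq 2$. It is worth making this explicit, but it does not affect the validity of your argument.
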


\begin{theorem} There is a unique element $h$ of $\mathcal{H}$ with
$\tau=1$ and this element gives the unique sequence
$\beta_1,\ldots,\beta_n,\gamma$ that satisfies $(a)$ and $(b)$.
\end{theorem}

\begin{proof} We set
$x_0=(\beta_1,\ldots,\beta_n,\gamma,1)\in\mathbb{N}^{n^2+n+1}$, where
$\beta_1,\ldots,\beta_n,\gamma$ is the unique sequence that satisfies
$(a)$ and $(b)$. First we show that $x_0$ is in $\mathcal{H}$.
Clearly $x_0$ is in $C$. Thus we may assume that
$x_0$ is written as
$$
x_0=\eta_1h_1+\cdots+\eta_kh_k,\ \ \ \ \ 0\neq
\eta_i\in\mathbb{N}\, \mbox{ for }i=1,\ldots,k,
$$
where $h_k$ has its last entry equal to $1$ and the last entry of
$h_i$ is equal to $0$ for $i<k$. The vector $h_k$ has the form
$$
h_k=(\beta_{1}^{(k)},\ldots,\beta_n^{(k)},\gamma^{(k)},1),
$$
where the $\beta_{i}^{(k)}$'s and $\gamma^{(k)}$ are in $\mathbb{N}^n$
and satisfy Eq.~(\ref{jan5-09}), i.e., they satisfy (a). Notice that from
the first equality one has $x_0\geq h_k$. Then $\beta_i\geq
\beta_i^{(k)}$ for all $i$ and $\gamma\geq \gamma^{(k)}$. Therefore
the $\beta_{i}^{(k)}$'s and $\gamma^{(k)}$ also satisfy (b).
Consequently $x_0=h_k$ and $x_0\in\mathcal{H}$, as claimed.
Let $h_i$ be any other element in $\mathcal{H}$ whose last entry
is equal to $1$. Next we show that $h_i$ must be equal to $x_0$. The
vector $h_i$ has the form
$$
h_i=(\beta_{1}^{(i)},\ldots,\beta_n^{(i)},\gamma^{(i)},1),
$$
where the $\beta_{j}^{(i)}$'s and $\gamma^{(i)}$ are in $\mathbb{N}^n$
and satisfy Eq.~(\ref{jan5-09}). Since the $\beta_{j}^{(i)}$'s and
$\gamma^{(i)}$ satisfy (a), it suffices to show that they also
satisfy (b). We proceed by contradiction. Assume that the
$\ell${\it th} entry of $\beta_{j}^{(i)}$ is not zero for
$j=1,\ldots,n$. For simplicity of notation assume that $\ell=1$. Then
the vectors
$$
h'=(e_1,\ldots,e_1,Ae_1,0)\ \mbox{and }\
h''=(\beta_{1}^{(i)}-e_1,\ldots,\beta_n^{(i)}-e_1,\gamma^{(i)}-Ae_1,1)
$$
are integral vectors that satisfy Eq.~(\ref{jan5-09}), i.e.,
$h'$ and $h''$ are integral vectors in $C$ and $h=h'+h''$, a
contradiction to Theorem~\ref{hb-description}.
\end{proof}

There
are computer programs that can be used to find integral Hilbert
basis of polyhedral cones defined by linear systems of the form
$x\geq 0\, ;A'x=0$, where $A'$ is an integral matrix. We have used
\cite{normaliz2} to compute some specific examples with this
procedure:

\begin{example}\label{propedeutico-d=2}
Consider the following matrix
$$
A=\left(\begin{matrix}
2&1&0\cr
0&1&1\cr
0&0&1
\end{matrix}\right)
$$
To compute $\beta_1,\beta_2,\beta_3,\gamma$ we use the following
input file for {\it Normaliz\/} \cite{normaliz2}
\begin{small}
\begin{verbatim}
9
13
2 1 0  0 0 0  0 0 0  -1 0 0  -1
0 1 1  0 0 0  0 0 0  0 -1 0  0
0 0 1  0 0 0  0 0 0  0 0 -1  0
0 0 0  2 1 0  0 0 0  -1 0 0  0
0 0 0  0 1 1  0 0 0  0 -1 0  -1
0 0 0  0 0 1  0 0 0  0 0 -1  0
0 0 0  0 0 0  2 1 0  -1 0 0  0
0 0 0  0 0 0  0 1 1  0 -1 0  0
0 0 0  0 0 0  0 0 1  0 0 -1  -1
5
\end{verbatim}
\end{small}
Part of the output file produced by {\it Normaliz} is:
\begin{small}
\begin{verbatim}
4 generators of integral closure:
 0 1 0 0 1 0 0 1 0 1 1 0 0
 0 0 1 0 0 1 0 0 1 0 1 1 0
 1 0 0 1 0 0 1 0 0 2 0 0 0
 1 1 0 0 2 0 1 0 1 2 1 0 1
\end{verbatim}
\end{small}
From the last row we get:
$\beta_1=(1,1,0),\beta_2=(0,2,0),\beta_3=(1,0,1),\gamma=(2,1,0)$.
\end{example}

\section{Application to Cremona maps}\label{cremona}

Let $R=k[x_1,\ldots,x_n]$ be a polynomial ring over a field $k$. 
Given $\alpha=(a_1,\ldots,a_n)\in {\mathbb N}^n$, we set 
${x}^{\alpha}:= x_1^{a_1}\cdots x_{n}^{a_{n}}$. In what follows we
consider a finite set of distinct monomials ${F}=\{{x}^{v_1},\ldots,
{x}^{v_n}\}\subset R$ of the same degree $d\geq
1$ and having no non-trivial common factor. We also assume throughout that every $x_i$
divides at least one member of $F$, a harmless condition.
The set $F$ defines a rational (monomial) map $\pr^{n-1}\dasharrow \pr^{n-1}$
which will also be denoted $F$ and written as a tuple
$F=({x}^{v_1},\ldots, {x}^{v_n})$. This map is said to be a {\em
Cremona map\/} (or a {\em Cremona transformation\/}) if it admits an
inverse rational map with source $\pr^{n-1}$. Note that a rational
monomial map is defined 
everywhere if and only if the defining monomials are pure powers of the
variables, in which case it is a Cremona map if and only if $d=1$
(the identity map up to permutation of variables or more precisely a
Cremona map of the form $F=(x_{\sigma(1)},\ldots,x_{\sigma(n)})$ for
some $\sigma$ in the symmetric group $S_n$). 
Finally, the integer $d$ is often called {\em
degree\/} of $F$ (not to be confused with its
degree as a map).

The {\it log-matrix\/} of $F$, denoted by $A$, is the $n\times n$ matrix with
column vectors $v_1,\ldots,v_n$. The next result will be used in
several places. 

\begin{proposition}{\rm\cite[Proposition
2.1]{SiVi}}\label{march27-11} $F$ is a Cremona 
map if and only if $\det(A)=\pm d$.
\end{proposition} 

\begin{theorem}\label{transcription} Let $F:\pr^{n-1}\dasharrow
\pr^{n-1}$ stand for a rational map defined by monomials
of fixed degree.
If $F$ is a Cremona map then its inverse  is also defined by monomials
of fixed degree.
Moreover, the degree as well as a set of monomials defining the
inverse can be obtained explicitly in terms of the given set of monomials 
defining $F$.
\end{theorem}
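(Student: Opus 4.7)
The plan is to translate the Cremona condition into the matrix-theoretic hypothesis of Theorem~\ref{ipn-ufpe}, and then read off the inverse map directly from the vectors $\beta_1,\ldots,\beta_n,\gamma$ produced there. Let $A$ be the $n\times n$ integer matrix whose columns are the exponent vectors $v_1,\ldots,v_n$ of the monomials defining $F$. Since the monomials all have common degree $d$, one has $|v_i|=d$ for each $i$. By the birationality criterion for monomial maps of \cite{bir2003} already recalled in the introduction, the map $F$ is Cremona precisely when $|\det(A)|=d$; thus the hypotheses of Theorem~\ref{ipn-ufpe} are satisfied.

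Applying Theorem~\ref{ipn-ufpe} produces vectors $\beta_1,\ldots,\beta_n,\gamma\in\mathbb{N}^n$ satisfying $A\beta_i=\gamma+e_i$ for all $i$ and such that the matrix $B$ with columns $\beta_i$ has at least one zero entry in every row. In a separate copy of the polynomial ring $k[y_1,\ldots,y_n]$ I would then define the candidate inverse
\[
G:=(y^{\beta_1},\ldots,y^{\beta_n}).
\]
Each $y^{\beta_i}$ has total degree $|\beta_i|$, and the last assertion of Theorem~\ref{ipn-ufpe} guarantees that this common value equals $(|\gamma|+1)/d$; hence $G$ is a tuple of monomials of one fixed degree, which will turn out to be the degree of the inverse.

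To verify that $G$ is the rational inverse of $F$, I would compute the composition $G\circ F$ coordinate-wise. Substituting $y_j=x^{v_j}$ into the $i$-th coordinate of $G$ gives
\[
\prod_{j=1}^{n}(x^{v_j})^{\beta_{ij}}=x^{\sum_j\beta_{ij}v_j}=x^{A\beta_i}=x^{\gamma+e_i}=x^{\gamma}\,x_i,
\]
so the coordinates of $G\circ F$ factor through the common monomial $x^{\gamma}$ and therefore represent the identity of $\pr^{n-1}$. The role of condition (b) in Theorem~\ref{ipn-ufpe} is precisely to guarantee that no variable $y_k$ divides every $y^{\beta_i}$, so the monomials defining $G$ have no non-trivial common factor and $G$ is a bona fide rational map in the conventions of this section. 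I expect the main subtlety to lie exactly in this bookkeeping: without (b), one still obtains a numerically correct family realising the identity after composition with $F$, but $G$ would not be in the reduced form imposed on the defining data of a Cremona map. With (b) in hand, the set $\{y^{\beta_1},\ldots,y^{\beta_n}\}$ is the desired defining set of $F^{-1}$, its common degree is $(|\gamma|+1)/d$, and the whole construction is explicit in the initial matrix $A$.
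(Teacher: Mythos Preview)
Your proof is correct and follows essentially the same strategy as the paper's: translate the Cremona hypothesis into $|\det A|=d$, invoke Theorem~\ref{ipn-ufpe}, and read the inverse off from the resulting matrix $B$. Two small remarks: the determinant criterion you use is stated in \cite[Theorem~1.2]{SiVi} rather than in \cite{bir2003}, and you compose in the order $G\circ F$ (which matches the relation $AB=\Gamma+I$ on the nose), whereas the paper writes down $F\circ G$ and then appeals to Theorem~\ref{ipn-ufpe} ``in the opposite direction'' to conclude that $G$ is itself Cremona; your one-sided identity together with the assumed invertibility of $F$ already forces $G=F^{-1}$, so your bookkeeping is slightly more economical.
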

\begin{proof} Let $F$ be a Cremona map defined by a set of monomials
$f_1,\ldots, f_n$ of the same degree $d\geq 1$. 
We set $f_i=x^{v_i}$ for $i=1,\ldots,n$. 
By Proposition~\ref{march27-11} the matrix of exponents of these
monomials (i.e., their log-matrix) $A$ has determinant $\pm d$.
Therefore Theorem~\ref{ipn-ufpe} implies the existence of an $n\times
n$ matrix $B$ such that $AB=\Gamma+I$, 
where $\Gamma$ is a matrix with repeated column $\gamma$ throughout.
Let $g_1,\ldots, g_n$ denote the monomials whose log-matrix is
$B$ and call $G$ the corresponding rational monomial map.  Letting
$x^{\gamma}$ denote
the monomial whose exponents are the coordinates of $\gamma$, the
above matrix equality translates into the equality 
$$
(f_1(g_1,\ldots, g_n),\ldots ,f_n(g_1,\ldots, g_n))=
({x}^{\gamma}\cdot x_1,\ldots, {x}^{\gamma}\cdot x_n).
$$
Thus the left hand side is proportional to the vector
$(x_1,\ldots,x_n)$ which means that the composite map $F\circ G$ 
is the identity map wherever the two are defined
(see \cite[proof of Proposition 2.1]{bir2003}).
On the other hand, since $B$ is the log-matrix of $g_1,\ldots, g_n$,
Theorem~\ref{ipn-ufpe} applied in the opposite
direction says that $G$ is also a Cremona map. Therefore $G$ has to
be the inverse of $F$, as required. Finally, notice that the proof of
Theorem~\ref{ipn-ufpe} provides an
algorithm to compute $B$ and $\gamma$. The input for this algorithm is
the log-matrix $A$ of $f_1,\ldots, f_n$.
\end{proof}

We will call a Cremona map as above a {\em Cremona monomial map}.
The theorem allows us to introduce the following group.

\begin{definition}\label{monomialgroup}
The {\em Cremona monomial group} of order $n-1$ is the subgroup
of the Cremona group of $\pr^{n-1}$ whose elements are Cremona monomial maps.
\end{definition}
Here we will not distinguish between rational maps $F,G:\pr^{n-1}\dasharrow \pr^{m-1}$
defined, respectively, by forms $f_1,\ldots, f_m$ of the same degree
and their multiples $g_1=gf_1,\ldots, g_m=gf_m$ by a fixed form $g$ of arbitrary degree.

There is a potential confusion in this terminology. For instance, in \cite{Kor} one
allows for more general maps by considering the free product with certain Klein groups.
Since our goal is combinatorial we will stick to the above definition
of the Cremona monomial group.

As a matter of notation, the composite of two Cremona maps $F,G$ will
be indicated by $FG$
(first $G$, then $F$).
Likewise, the power composite $F\cdots F$ of $m$ factors will be denoted $F^m$.

We shift our attention to {\em plane} Cremona monomial maps, i.e., we will study
the structure of the Cremona monomial group for $n=3$. Consider the maps
$H=(x_1^2,x_1x_2,x_2x_3)$ and $S=(x_1x_2,x_1x_3,x_2x_3)$.

$\bullet$  For any $d\geq 1$, one has $H^{d-1}=(x_1^d, x_1^{d-1}x_2, x_2^{d-1}x_3)$.

This is a straightforward composite calculation: by induction, we
assume that $H^{d-2}$ is defined by $x_1^{d-1}, x_1^{d-2}x_2, x_2^{d-2}x_3$, 
hence $H^{d-1}=H^{d-2}H$ is defined by $x_1^{2(d-1)}, x_1^{2d-3}x_2,
x_1^{d-2}x_2^{d-1}x_3$ which is the same rational map as 
the one defined by $x_1^d, x_1^{d-1}x_2, x_2^{d-1}x_3$ (by canceling
the $\gcd$ $x_1^{d-2}$)

$\bullet$  For any $d\geq 1$, up to permutation of the variables both in
the source and
the target, $H^{d-1}$ is
involutive, i.e., coincides with its own inverse -- in
\cite{birational-linear} this was called ``\/$p$-involutive''.

Namely, consider the Cremona  map
$G=(x_1x_2^{d-1},x_2^d,x_1^{d-1}x_3)$. Again, one finds
straightforwardly that  
$H^{d-1}G$ is defined by $x_1^dx_2^{d(d-1)},
x_1^{d-1}x_2^{(d-1)^2+d}x_2^d, x_1^{d-1}x_2^{d(d-1)}x_3$; canceling
the $\gcd$ $x_1^{d-1}x_2^{d(d-1)}$
yields the identity map $(x_1,x_2,x_3)$.

$\bullet$  The monomial maps $S$ and $H$ ``nearly'' commute;
actually they are conjugate by a transposition.

This is once more straightforward: the general composites $SH^{d-1}$
and $H^{d-1}S$ are defined, respectively, by 
$x_1^d, x_1x_2^{d-2}x_3, x_2^{d-1}x_3$ and $x_1x_2^{d-1},
x_1x_2^{d-2}x_3, x_3^d$ and these are the same up to transposing
$x_1$ and $x_3$ and  
the extreme terms. This means that they are conjugate by a transposition.
 It has as a consequence that the subgroup of the Cremona group
of $\pr^2$ generated by $S$ and $H$ is Abelian up to free product
with the symmetric group $S_3$ (cf. \cite[p.~1672]{Kor}). 

\begin{lemma}\label{pillo-boda-28march}
Let $F$ be a plane Cremona map of degree $d$ of the form
$F=(x_1^{a_1}x_2^{a_2},x_2^{b_2}x_3^{b_3},x_1^{c_1}x_3^{c_3})$. Then,
up to permutation of the source $($variables$)$ and the target
$($monomials$)$, $F$ is one of the following
two kinds:
$$F=(x_1x_2,x_2x_3,x_1x_3) \ \mbox{ or }\
F=(x_1^d,x_2x_3^{d-1},x_1^{d-1}x_3).
$$
\end{lemma}

\begin{proof} Let $A$ be the log-matrix of $F$. The determinant of $A$
is equal to $\pm d$ by Proposition~\ref{march27-11}. Since
$\det(A)=a_1b_2c_3+c_1a_2b_3$, we get that $\det(A)=d$. Hence, one has 
\begin{equation}\label{march23-09}
d=a_1+a_2=b_2+b_3=c_1+c_3=a_1b_2c_3+c_1a_2b_3\ \mbox{ and }\
a_1(b_2c_3-1)=a_2(1-c_1b_3). 
\end{equation}
The cases below can be readily verified using these equations.

Case (I): $a_1\geq 1$, $a_2=0$. Then $a_1=d$, $b_2=c_3=1$,
and $F=(x_1^d,x_2x_3^{d-1},x_1^{d-1}x_3)$.

Case (II): $a_1=0$, $a_2\geq 1$. Then $a_2=d$, $b_3=c_1=1$,
and $F=(x_2^d,x_2^{d-1}x_3,x_1x_3^{d-1})$.

Case (III)(a): $a_1\geq 1$, $a_2\geq 1$, $b_2c_3=0$, $b_2=0$. Then
$F=(x_1^{d-1}x_2,x_3^d,x_1x_3^{d-1})$.

Case (III)(b): $a_1\geq 1$, $a_2\geq 1$, $b_2c_3=0$, $c_3=0$. Then
$F=(x_1^{d-1}x_2,x_2^{d-1}x_3,x_1^d)$.

Case (III)(c): $a_1\geq 1$, $a_2\geq 1$, $c_1b_3=0$, $c_1=0$. Then
$F=(x_1x_2^{d-1},x_2x_3^{d-1},x_3^d)$.

Case (III)(d): $a_1\geq 1$, $a_2\geq 1$, $c_1b_3=0$, $b_3=0$. Then
$F=(x_1x_2^{d-1},x_2^{d},x_1^{d-1}x_3)$.

Case (III)(e): $a_1\geq 1$, $a_2\geq 1$, $b_2c_3-1\geq 0$,
$1-c_1b_3\leq 0$.  From the last equality in Eq.~(\ref{march23-09}) we get
$b_2c_3=1$ and $c_1b_3=1$. Then $F=(x_1x_2,x_2x_3,x_1x_3)$.
\end{proof}

We next give a purely integer matrix theoretic proof of the following
result which was essentially proved in \cite[Section~3]{Kor}. 

\begin{proposition}\label{group_structure}
If $n=3$, then up to permutation of the variables
{\rm (}both in the source and the target{\rm )},
the plane Cremona monomial group is generated by the maps $S, H$,
where $S$ is the quadratic map of first kind defined
by $x_1x_2, x_1x_3, x_2x_3$ {\rm (}Steiner involution{\rm )}
and $H$ is the quadratic map of second kind
defined by $x_1^2, x_1x_2, x_2x_3$ {\rm (}``hyperbolism''{\rm )}.
\end{proposition}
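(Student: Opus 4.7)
I proceed by strong induction on $d=\deg F$. For the base cases $d=1$ and $d=2$, I enumerate directly: the integer-matrix constraints on the log-matrix $A$ (column sums $d$, $\det A=\pm d$, columns distinct, and at least one zero per row arising from the no-common-factor assumption) pin down the degree-$1$ maps to coordinate permutations and the degree-$2$ maps, up to source and target permutations, to exactly $S$ and $H$.

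For the inductive step at $d\geq 3$, let $A$ be the log-matrix of $F$ and split into two cases according to whether the zero entries of $A$ admit a \emph{transversal}, i.e., a choice of one zero from each row with all three zeros in distinct columns. If a transversal exists, then after applying source and target permutations to move these zeros to the positions $(1,2),(2,3),(3,1)$, $F$ takes the form $(x_1^{a_1}x_2^{a_2},x_2^{b_2}x_3^{b_3},x_1^{c_1}x_3^{c_3})$ of Lemma~\ref{pillo-boda-28march}. Since $d\geq 3$ rules out the $S$ branch of that lemma, $F$ is a permutation of $H^{d-1}$, which by the first bullet preceding Lemma~\ref{pillo-boda-28march} is the $(d-1)$-fold self-composite of $H$, settling this case.

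If no transversal exists, Hall's marriage theorem applied to the zero pattern of $A$ implies that either (a) some two rows of $A$ have all their zeros concentrated in a single column, forcing the corresponding monomial to be a pure power $x_m^d$, or (b) some column of $A$ has no zero at all, meaning some $f_i$ involves all three variables. In either sub-case my plan is to compose $F$ on the left with a quadratic Cremona monomial map $Q\in\{S,H\}$ (with source and target variables permuted as needed) chosen so that its non-trivial structure aligns with the special monomial of $F$; this makes the resulting three composed monomials share a common factor of total degree exceeding $d$. After dividing out this factor, $QF$ is a Cremona monomial map of degree strictly less than $d$, and the inductive hypothesis writes $QF$ as a product of $S$, $H$, and permutations, whence so does $F=Q^{-1}(QF)$.

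The main obstacle I anticipate is the uniformity of the reduction in the non-transversal case: one must verify across all non-transversal zero-patterns of $A$ that an appropriate $Q$ always exists and always produces a strict drop in degree. This is a combinatorial case analysis tightly controlled by the determinant identity $\det A=\pm d$ and the column sum condition $|v_i|=d$ from Theorem~\ref{ipn-ufpe}, which together severely restrict the admissible log-matrices at each degree and make the case-by-case reduction feasible.
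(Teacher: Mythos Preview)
Your plan is essentially the paper's own argument, organized slightly differently. Both proceed by induction on $d$, split into the ``Lemma~\ref{pillo-boda-28march} shape'' versus the ``pure-power column'' shape, and in the latter compose on the left with $S$ or $H$ to push the induction through.

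A few comparative remarks. Your use of Hall's marriage theorem to justify the dichotomy is a clean addition; the paper simply asserts ``there are essentially two cases'' without argument. Note, however, that your sub-case (b) is redundant: if some column of $A$ has no zero, then the zeros of all three rows lie in only two columns, and the pigeonhole/Hall argument already forces two rows to have their zeros in a single column, which yields a pure-power column---so (b) collapses into (a).

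The one substantive organizational difference is this: you aim to \emph{strictly} reduce the degree in every non-transversal instance, while the paper does not. In its Case~(I) ($a_1\ge b_1$) the paper composes with $S$ and lands \emph{directly} in the Lemma~\ref{pillo-boda-28march} form, expressing $F$ as $S\cdot H^{a_1+b_2-1}$; the degree $a_1+b_2=d+(a_1-b_1)$ may actually go \emph{up}. Your strict-reduction strategy is still achievable: in Case~(I) the determinant constraint $a_1b_2-a_2b_1=\pm1$ together with $a_1\ge b_1\ge1$ forces $b_2>a_2\ge1$ (the non-transversal hypothesis gives $a_2\ge1$), so swapping the source variables $x_1\leftrightarrow x_2$ converts Case~(I) into the paper's Case~(II), where composing with $H$ genuinely drops the degree by one. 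Either bookkeeping works; just be aware that ``align $Q$ with the special monomial so the common factor has degree~$>d$'' is exactly the place where the determinant identity $a_1b_2-a_2b_1=\pm1$ must be invoked, and the paper's Case~(II)(a)/(b) split shows how.
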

\begin{proof} Let
$F=(x_1^{a_1}x_2^{a_2}x_3^{a_3},x_1^{b_1}x_2^{b_2}x_3^{b_3},x_1^{c_1}x_2^{c_2}x_3^{c_3})$
be a plane Cremona map of degree $d$.
We will prove that, up to a permutation of the
variables and the monomials, every plane Cremona monomial map is of the form
$F\cdots SH^{d_{i_k}}SH^{d_{i_{k+1}}}\cdots G$,
where $F,G$ are in $\{S, H^{d}\,|\, d\geq 1\}$. The proof is by
induction on the degree. Consider the log-matrix of $F$:
$$
A=\left(\begin{matrix} a_1&b_1&c_1\\
a_2&b_2&c_2\\
a_3&b_3&c_3
\end{matrix}
\right).
$$
Up to permutation of variables and monomials, there are
essentially two cases to consider:
$$
\begin{array}{ccc}
A=\left(\begin{matrix} a_1&b_1&0\\
a_2&b_2&0\\
a_3&0&d
\end{matrix}
\right)
&
\ \ \ \mbox{ or }\ \ \
&
A=\left(\begin{matrix} a_1&0&c_1\\
a_2&b_2&0\\
0&b_3&c_3
\end{matrix}
\right).
\end{array}
$$
The determinant of $A$ is $\pm d$ by Proposition~\ref{march27-11}. 
By Lemma~\ref{pillo-boda-28march} we may assume that $A$ is the matrix
on the left, i.e.,
$F=(x_1^{a_1}x_2^{a_2}x_3^{a_3},x_1^{b_1}x_2^{b_2},x_3^d)$.

Case (I): $a_1\geq b_1$. From $a_1+a_2+a_3=b_1+b_2=d$, we get that
$b_2\geq a_2$. Consider the Steiner involution given by
$S=(x_1x_2,x_2x_3,x_1x_3)$. Then
$$
SF=(x_1^{a_1+b_1}x_2^{a_2+b_2}x_3^{a_3},
x_1^{b_1}x_2^{b_2}x_3^{d},
x_1^{a_1}x_2^{a_2}x_3^{d+a_3})=
x_1^{b_1}x_2^{a_2}x_3^{a_3}(x_1^{a_1}x_2^{b_2},x_2^{b_2-a_2}x_3^{d-a_3},
x_1^{a_1-b_1}x_3^{d}).
$$
Thus by Lemma~\ref{pillo-boda-28march} we get that $SF=H^{a_1+b_2-1}$
for some hyperbolism $H$ or
$SF$ is a Steiner involution. Thus multiplying by the inverse of $S$,
we obtain that $F$ has the required form.

Case (II)(a): $b_1>a_1$ and $\det(A)=d$. By
Lemma~\ref{pillo-boda-28march} we may assume $a_1\geq 1$. From the
equality
$d=\det(A)=d(a_1b_2-a_2b_1)$, we get that $b_2\geq a_2$. Consider the
hyperbolism $H=(x_1^2,x_1x_2,x_2x_3)$.
In this case we have
\begin{eqnarray*}
HF&=&(x_1^{2a_1}x_2^{2a_2}x_3^{2a_3},
x_1^{a_1+b_1}x_2^{a_2+b_2}x_3^{a_3},
x_1^{b_1}x_2^{b_2}x_3^{d})\\
&=&
x_1^{a_1+1}x_2^{a_2}x_3^{a_3}
(x_1^{a_1-1}x_2^{a_2}x_3^{a_3},x_1^{b_1-1}x_2^{b_2},
x_1^{b_1-a_1-1}x_2^{b_2-a_2}x_3^{d-a_3})\\
&=&x_1^{a_1+1}x_2^{a_2}x_3^{a_3}F_1.
\end{eqnarray*}
Since $F_1$ has degree at most $d-1$,
we have lowered the degree of $F$. Thus by induction $F$ has the required
form.

Case (II)(b): $b_1>a_1$ and $\det(A)=-d$. We may assume that
$a_2\geq b_2$, otherwise we may proceed as in Case (II)(a). By
Lemma~\ref{pillo-boda-28march} we may also assume that $a_1\geq 1$. We
claim that $F$ has the form $F=(x_1^{d-2}x_2x_3,x_1^{d-1}x_2,x_3^d)$.
The condition on the determinant is equivalent to the equality
$a_2b_1-a_1b_2=1$. Thus $a_2\geq 1$. Hence using that $b_1\geq a_1+1$
one has
$$
a_2b_1\geq a_2a_1+a_2\geq b_2a_1+1=a_2b_1.
$$
Consequently $a_2=b_2=1$. The condition $\det(A)=-d$, becomes
$a_1=b_1-1=d-2$. Therefore $F$ has the asserted form. Consider the
hyperbolism $H=(x_1^2,x_1x_2,x_2x_3)$. It is easy to see that
$HF$ is equal to $x^{\gamma}(x_1^{d-3}x_2x_3,x_1^{d-2}x_2,x_3^{d-1})$
for some monomial $x^{\gamma}$. Thus we have lowered the degree of $F$
and we may apply induction.
\end{proof}

The next result is classically well-known. We give a simple direct
proof in the case 
of plane Cremona monomial maps. The  proof shows moreover
explicit simple formulae for the Cremona inverse in the case of $3$ variables.

\begin{proposition}\label{degree_of_inverse} If $n=3$, then a plane Cremona
monomial map and its inverse have the same degree.
\end{proposition}

\begin{proof} The proof is based on the method employed in the
proof of Theorem~\ref{ipn-ufpe}. Let $A$ be the log-matrix of a plane
Cremona map $F$ of degree $d$:
$$
A=\left(\begin{matrix} a_1&b_1&c_1\\
a_2&b_2&c_2\\
a_3&b_3&c_3
\end{matrix}
\right).
$$
By Proposition~\ref{march27-11} we may assume that $\det(A)=d$, the case $\det(A)=-d$
can be shown similarly. Up to permutation of variables, as in the proof of
Proposition~\ref{group_structure}, there are
essentially two cases to consider:
$$
\begin{array}{ccc}
A=\left(\begin{matrix} a_1&b_1&0\\
a_2&b_2&0\\
a_3&0&d
\end{matrix}
\right)
&
\ \ \ \mbox{ or }\ \ \
&
A=\left(\begin{matrix} a_1&0&c_1\\
a_2&b_2&0\\
0&b_3&c_3
\end{matrix}
\right).
\end{array}
$$
It is readily seen that the inverse of $A$ is given by
$$
\begin{array}{ccc}
A^{-1}=\left(\begin{matrix} b_2&-b_1&0\\
-a_2&a_1&0\\
-a_3b_2/d&b_1a_3/d&1/d
\end{matrix}
\right)
&
\ \ \ \mbox{ or }\ \ \
&
A^{-1}=\displaystyle\frac{1}{d}\left(\begin{matrix}
b_2 c_3& b_3 c_1& -b_2 c_1\\
-a_2 c_3& a_1 c_3& a_2 c_1\\
a_2
b_3& -a_1 b_3& a_1 b_2
\end{matrix}
\right).
\end{array}
$$
respectively. By the argument given in the proof of
Theorem~\ref{ipn-ufpe} we get that
$$
\beta_1=(d,0,0),\  \gamma=(da_1-1,da_2,da_3)\ \ \mbox{ or }\ \
\beta_1=(b_2,0,b_3),\  \gamma=(a_1 b_2 + b_3 c_1-1, a_2 b_2,
b_3 c_3)
$$
respectively. Therefore
$$
\begin{array}{ccc}
B=\left(\begin{matrix} d&0&b_1\\
0&a_1+a_2&a_2\\
0&a_3&(a_3b_2+1)/d
\end{matrix}
\right)
&
\ \ \ \mbox{ or }\ \ \
&
B=\left(\begin{matrix} b_2&c_1&0\\
0&c_3&a_2\\
b_3&0&a_1
\end{matrix}
\right).
\end{array}
$$
respectively. Using that the sum of the entries in every column of $A$ is equal to
$d$ and $\det(A)=d$, it is seen that the sum of the entries in each column
of $B$ is equal to $d$ and $\det(B)=d$.\end{proof}

One of the peculiarities of the theory is that even if the given
monomials are square-free
to start with, the inverse map is generally defined by non-square-free monomials. This
makes classification in high degrees, if not the structure of the
Cremona monomial group
itself, a difficult task (see \cite{costa-simis,pan}). 
A complete classification of monomial
Cremona maps of degree $2$ in any number of variables is given in \cite{costa-simis}. 

\bigskip

\noindent{\bf Acknowledgment.}  The authors would like to thank an 
anonymous referee for providing us 
with useful comments and suggestions. 

\bibliographystyle{plain}

\end{document}